\long\def\symbolfootnote[#1]#2{\begingroup%
\def\thefootnote{\fnsymbol{footnote}}\footnote[#1]{#2}\endgroup}
\def\imod#1{\allowbreak\mkern10mu({\operator@font mod}\,\,#1)}
\def \B {{\rm B}}
\newtheorem{theorem}{Theorem}[section]
\newtheorem{lemma}[theorem]{Lemma}
\newtheorem{corollary}[theorem]{Corollary}
\newtheorem{proposition}[theorem]{Proposition}
\newtheorem*{theorem*}{Theorem}
\theoremstyle{definition}
\newtheorem{remark}[theorem]{Remark}
\newtheorem{problem}[theorem]{Problem}
\newtheorem{example}[theorem]{Example}
\numberwithin{equation}{section}
\newcommand{\ignore}[1]{}
\newcommand{\mynote}[1]{}
\begin{document}
\setcounter{section}{0}
\setcounter{tocdepth}{1}
\title[Centralizer classes in the group of upper triangular matrices]{Conjugacy classes of centralizers in the group of upper triangular matrices}
\author{Sushil Bhunia}
\address{IISER Mohali, Knowledge city, Sector 81, SAS Nagar, P.O. Manauli, Punjab 140306, INDIA}
\email{sushilbhunia@gmail.com}
\date{}
\thanks{The author is supported by the SERB, India (No. PDF/2017/001049) and DST-RFBR joint Indo-Russian project (No. INT/RUS/RFBR/P-288).}
\subjclass[2010]{20E45, 20G15, 15A21}
\today
\keywords{Upper triangular matrices, conjugacy classes, $z$-classes}
\begin{abstract}
	Let $G$ be a group. Two elements $x,y \in G$ are said to be in the same $z$-class if their centralizers in $G$ are conjugate within $G$. In this paper, we prove that the number of $z$-classes in the group of upper triangular matrices is infinite provided that the field is infinite and size of the matrices is at least $6$, and finite otherwise.
\end{abstract}
\maketitle
\section{Introduction}
Let $G$ be a group. Two elements $x$ and $y$ in $G$ are said to be $z$-equivalent, denoted by
$x\sim_{z} y$, if their centralizers in $G$ are conjugate, i.e., $\mathcal Z_G(y)=g\mathcal Z_G(x)g^{-1}$ for some $g\in G$, where $\mathcal Z_G(x):=\{y\in G \mid xy=yx \}$ denotes centralizer of $x$ in $G$. Clearly, $\sim_{z}$ is an equivalence relation on $G$. The equivalence classes with respect to this relation are called \textbf{$z$-classes}. It is easy to see that if two elements of a group $G$ are conjugate then their centralizers are conjugate thus they are also $z$-equivalent. However, in general, the converse is not true. In geometry, $z$-classes describe the behaviour of dynamical types (see for example~\cite{Ku1}, \cite{Ci}, and \cite{Go}). That is, if a group $G$ is acting on a manifold $M$ then understanding (dynamical types of) orbits is related to understanding (conjugacy classes of) centralizers. 

Robert Steinberg~\cite{St} (Section 3.6 Corollary 1 to Theorem 2) proved that for a reductive algebraic group $G$ defined over an algebraically closed field, of good characteristic, the number of $z$-classes is finite. A natural question that followed:
{\it Is the number of $z$-classes finite for algebraic group $G$ defined over an arbitrary field $k$?}
In~\cite{Si}, A. Singh studied $z$-classes for real compact groups of type $\mathrm{G}_2$.
Ravi S. Kulkarni, in~\cite{Ku}, proved that the number of $z$-classes in $\mathrm{GL}_n(k)$ is finite if the field $k$ has only finitely many field extensions of any fixed finite degree. Unless otherwise specified, we will always assume that $k$ is a field of $\mathrm{char}\; \neq 2$.
Let $V$ be an $n$-dimensional vector space over a field 
$k$ equipped with a non-degenerate symmetric or skew-symmetric bilinear form $B$. Then, in~\cite{GK}, 
it is proved that there are only finitely many $z$-classes in orthogonal groups $\mathrm{O}_n(k)$ and symplectic groups $\mathrm{Sp}_n(k)$ if $k$ has only finitely many field extensions of any fixed finite degree.
Let $k$ be a perfect field with a non-trivial Galois automorphism of order $2$. 
Let $V$ be an $n$-dimensional vector space over $k$ equipped with a non-degenerate Hermitian form $H$. 
Suppose that the fixed field $k_0$ has only finitely many field extensions of any fixed finite degree.
Then, in~\cite{BS}, we proved that the number of $z$-classes in the unitary group $\mathrm{U}_n(k_0)$ is finite. All the groups mentioned so far are special types of reductive algebraic groups. A natural problem to study would be to consider $z$-classes in non-reductive ones. In particular, one may consider the following problem:
\begin{problem}\label{prob2}
	Is the number of $z$-classes finite for a solvable  algebraic group?
\end{problem} 
 Let $G$ be a connected solvable linear algebraic group over an algebraically closed field $k$, then by Lie-Kolchin theorem (see Theorem 17.6~\cite{Hu1}) $G$ is a subgroup of the group of upper triangular matrices in $\mathrm{GL}_n(k)$ for some $n$. 
 
 Let $\mathrm{B}_n(k)$ denote the group of upper triangular matrices in $\mathrm{GL}_n(k)$. 
In this paper, we solve the Problem~\ref{prob2} for this special classes of groups.
In a sequel, we will do this for general nilpotent and solvable groups. The main result of this paper is the following theorem, which solves Problem~\ref{prob2} for the group of upper triangular matrices:
\begin{theorem}\label{maintheorem}\noindent
\begin{enumerate}
\item For $2\leq n \leq 5$, the number of $z$-classes in $\mathrm{B}_n(k)$ is finite. 
\item For $n\geq 6$, the number of $z$-classes in $\mathrm{B}_n(k)$ is infinite.
\end{enumerate}
\end{theorem}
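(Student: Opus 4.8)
The plan is to identify the number of $z$-classes with the number of $\mathrm{B}_n(k)$-conjugacy classes of subgroups arising as centralizers $\mathcal{Z}_{\mathrm{B}_n(k)}(x)$, and to run the analysis through the Jordan decomposition $x=x_sx_u$, which is defined over $k$ because the eigenvalues of $x$ are its diagonal entries. Two preliminary reductions drive everything. First, conjugation inside $\mathrm{B}_n(k)$ fixes the diagonal entrywise, so the partition of $\{1,\dots,n\}$ recording which diagonal entries coincide is a conjugacy invariant; and since a semisimple upper-triangular matrix with eigenvalues in $k$ is $\mathrm{B}_n(k)$-conjugate to a diagonal one, I may assume $x_s$ is diagonal. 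Second, $\mathcal{Z}_{\mathrm{B}_n(k)}(x)=\mathcal{Z}_{\mathrm{B}_n(k)}(x_s)\cap\mathcal{Z}_{\mathrm{B}_n(k)}(x_u)$, where $\mathcal{Z}_{\mathrm{B}_n(k)}(x_s)$ is the ``Levi-type'' subgroup $L$ cut out by the coincidence pattern of the diagonal (a product $\mathrm{B}_{n_1}(k)\times\cdots\times\mathrm{B}_{n_r}(k)$ when the equal eigenvalues are contiguous), and $x_u$ is a unipotent element of $L$. Thus the whole problem reduces to classifying, up to conjugacy, the centralizers of unipotent elements inside these smaller upper-triangular groups.

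For part (1) I would stratify by the finitely many coincidence patterns of the diagonal. In each stratum the residual datum is a unipotent element $x_u$ of a group $L$ whose factors are upper-triangular groups of size at most $5$. The decisive input is that for $m\le 5$ the conjugation action of $\mathrm{B}_m(k)$ on its unipotent elements (equivalently, the adjoint action on the strictly upper-triangular nilpotents) has only finitely many orbits; hence there are only finitely many possibilities for $x_u$ up to conjugacy and a fortiori finitely many centralizers $\mathcal{Z}_L(x_u)$. Combining the finitely many patterns with the finitely many unipotent types yields the finiteness bound. The genuinely laborious part is the explicit orbit bookkeeping for the larger blocks, i.e.\ the cases $n=4$ and $n=5$.

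For part (2) the strategy is to exhibit, when $k$ is infinite and $n\ge 6$, an explicit one-parameter family $x_t=I+N_t$ of unipotent elements living in the ``all eigenvalues equal'' stratum of $\mathrm{B}_6(k)$ (padded by an identity block for $n>6$), indexed by $t\in k$, whose centralizers are pairwise non-conjugate for infinitely many $t$. Because $\mathcal{Z}_{\mathrm{B}_6(k)}(I+N_t)=\{g\in\mathrm{B}_6(k):gN_t=N_tg\}$, this reduces to choosing nilpotent matrices $N_t$ with non-conjugate commutant subgroups. That such continuous families exist exactly at $6\times 6$ is the reflection of the fact that the Borel action on the nilradical in type $A$ first acquires positive modality at rank $5$; this is the source of the threshold $n=6$. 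To separate the centralizers I would use a torus-invariant cross-ratio built from four entries occupying a $2\times 2$ rectangle of positions $(i,j),(i,l),(k,j),(k,l)$ lying high in the lower central series filtration of the nilradical $\mathfrak{n}$: on the associated graded the unipotent part of $\mathrm{B}_6(k)$ acts trivially while the torus scales the four entries by characters whose cross-ratio combination is trivial, so the quantity descends to an invariant of the centralizer up to conjugacy and varies with $t$.

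The main obstacle is precisely the last step, because $z$-equivalence is strictly coarser than conjugacy: exhibiting infinitely many conjugacy classes of \emph{elements} does not by itself give infinitely many $z$-classes, since distinct unipotent classes can share a common centralizer type. What must be proved is that the subgroups $\mathcal{Z}_{\mathrm{B}_6(k)}(x_t)$ themselves fall into infinitely many conjugacy classes. I would secure this either by verifying that the cross-ratio above is an honest conjugation-invariant of the subgroup taking infinitely many values along the family, or, more robustly, by a dimension count: if the centralizers have constant dimension $d$, then $t\mapsto\mathcal{Z}_{\mathrm{B}_6(k)}(x_t)$ traces a curve in the variety of $d$-dimensional subgroups of $\mathrm{B}_6(k)$, and---conjugation orbits being locally closed---a curve not contained in a single orbit must meet infinitely many of them. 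Showing that this curve is not swept out by one orbit (equivalently, that the invariant is not trivialized by the full unipotent action, as it is for $n\le 5$) is the crux of the whole argument, and is where the contrast between $n\le 5$ and $n\ge 6$ is ultimately decided.
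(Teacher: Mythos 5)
Your overall architecture coincides with the paper's: Jordan decomposition $x=x_sx_u$, the identity $\mathcal{Z}(x)=\mathcal{Z}_{\mathcal{Z}(x_s)}(x_u)$, a finite combinatorial count of semisimple (diagonal) $z$-classes, finiteness of unipotent $\mathrm{B}_m(k)$-classes for $m\le 5$ via Belitskii's algorithm for part (1), and for part (2) a one-parameter family of unipotent elements in $\mathrm{B}_6(k)$, padded by an identity block for $n>6$. Part (1) of your proposal is essentially the paper's argument and is sound modulo the cited orbit bookkeeping.

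The genuine gap is in part (2), and you have correctly located it yourself: knowing that the $u_t$ fall into infinitely many conjugacy classes does not show that their centralizers fall into infinitely many conjugacy classes, and neither of your two proposed remedies is actually carried out. The ``cross-ratio'' invariant is never specified (which four positions, why the unipotent radical acts trivially on that combination, why it is an invariant of the subgroup $\mathcal{Z}(u_t)$ rather than of the element $u_t$, and why it takes the value $t$); and the dimension-count argument is circular as stated, since ``the curve is not contained in a single conjugation orbit'' is precisely the assertion to be proved, and moreover the orbit-closure/local-closedness machinery is delicate over an arbitrary infinite field $k$ where $\mathrm{B}_n(k)$ is only the group of $k$-points. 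The paper resolves this step by brute force and it is genuinely the crux: it writes out $\mathcal{Z}_{\mathrm{B}_6(k)}(u_\alpha)$ explicitly as an $8$-parameter family of matrices, assumes $P\mathcal{Z}(u_\alpha)P^{-1}=\mathcal{Z}(u_\beta)$ with $P=(p_{ij})\in\mathrm{B}_6(k)$, uses the fact that upper-triangular conjugation preserves diagonals to match the parametrizations entry by entry, and by specializing the free parameters $(b_1,b_2,b_6,b_7)$ to suitable coordinate vectors extracts the three relations $p_{23}=0$, $\alpha p_{22}=\beta p_{33}$ and $p_{22}=p_{33}$, whence $\alpha=\beta$. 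Some explicit computation of this kind (or a fully specified invariant of the subgroup) is indispensable; without it your proposal establishes only that there are infinitely many unipotent conjugacy classes, which is the already known result of Djokovic--Malzan and is strictly weaker than the statement about $z$-classes.
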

In Section 2, we explore the semisimple $z$-classes for the group of upper triangular matrices. In Section 3, we study unipotent conjugacy classes and unipotent $z$-classes in $\mathrm{B}_n(k)$. In Section 4, we prove our main theorem of this paper. Throughout the paper, we assume that $k$ is an infinite field of $\mathrm{char}\; k\neq 2$. Here we include an appendix, which contains an explicit computation of unipotent conjugacy classes and their representatives in the group of upper triangular matrices $\B_n(k)$ for $n=2, 3, 4, 5$ using Belitskii's algorithm.
\section{Semisimple \texorpdfstring{$z$}{}-classes in \texorpdfstring{$\B_n(k)$}{}}
Let $n$ be a positive integer with a partition  
$\lambda=(1^{k_{1}} 2^{k_{2}}\ldots n^{k_{n}})$, denoted by 
$\lambda \vdash n$, 
i.e., $n=\sum_{i}ik_{i}$. 
\begin{proposition}\label{semisimplez}
	The number of semisimple $z$-classes in $\B_n(k)$ is 
	\[ \displaystyle \sum_{(1^{k_1} 2^{k_2}\ldots n^{k_n})\vdash n}\frac{n!}{\prod_{j=1}^{n}(j!)^{k_j}(k_j!)}.\] So, in particular, the number of semisimple $z$-classes in $\B_n(k)$ is finite.
\end{proposition}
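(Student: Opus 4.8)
The plan is to exhibit a bijection between the semisimple $z$-classes of $\B_n(k)$ and the set partitions of $\{1,\dots,n\}$. Since the number of set partitions with exactly $k_j$ blocks of size $j$ is $n!/\prod_{j}(j!)^{k_j}(k_j!)$, summing over all $\lambda\vdash n$ yields the claimed total (the $n$-th Bell number, i.e.\ the total number of set partitions of $\{1,\dots,n\}$), and finiteness is then immediate. First I would reduce to diagonal representatives: using the Jordan decomposition, a semisimple $x\in\B_n(k)$ is diagonalizable with eigenvalues equal to its diagonal entries, and a standard triangular change of basis shows $x$ is conjugate in $\B_n(k)$ to $s=\diag(a_1,\dots,a_n)$. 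To such an $s$ I attach the set partition $P(s)$ of $\{1,\dots,n\}$ defined by $i\sim j\iff a_i=a_j$. Because $k$ is infinite there are enough distinct scalars to realize every set partition, so every $P$ occurs.

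Next I would compute the centralizer. A direct entrywise check gives
\[
\mathcal{Z}_{\B_n(k)}(s)=\{x\in\B_n(k)\mid x_{ij}=0 \text{ whenever } i\not\sim_{P(s)} j\},
\]
which depends only on $P(s)$; write it $C_P$. In particular $P(s)=P(s')$ forces $C_{P(s)}=C_{P(s')}$, so $s\mapsto P(s)$ descends to a surjection from set partitions onto semisimple $z$-classes. It then remains to prove injectivity: if $gC_Pg^{-1}=C_{P'}$ for some $g\in\B_n(k)$, then $P=P'$.

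This injectivity is the key step and the main obstacle, and I would handle it by a double-centralizer argument. Let $T\subseteq\B_n(k)$ be the diagonal torus and $T_P:=\{y\in T\mid y_i=y_j \text{ whenever } i\sim_P j\}$. Since $T\subseteq C_P$ and $\mathcal{Z}_{\B_n(k)}(T)=T$, a short computation shows $\mathcal{Z}_{\B_n(k)}(C_P)=T_P$; hence $gC_Pg^{-1}=C_{P'}$ implies $gT_Pg^{-1}=T_{P'}$. Writing $g=t_0u$ with $t_0\in T$ and $u$ unipotent, and using that $T$ is abelian and normalizes $T_{P'}$, I may assume $g=u$ is unipotent with $uT_Pu^{-1}=T_{P'}\subseteq T$. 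Now comes the crucial rigidity: conjugation by a unipotent upper triangular matrix preserves every diagonal entry, so for each $t\in T_P$ the matrix $utu^{-1}$, which lies in $T_{P'}\subseteq T$ and is therefore diagonal, has the same diagonal as $t$ and hence equals $t$. Thus $u$ centralizes $T_P$, giving $T_{P'}=T_P$. Finally I recover the partition from the torus by noting that $i\sim_P j$ precisely when the coordinate characters $y\mapsto y_i$ and $y\mapsto y_j$ agree on $T_P$; since $T_P=T_{P'}$, these coincidence patterns agree and $P=P'$. This establishes the bijection and hence the stated count.
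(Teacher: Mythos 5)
Your proposal is correct, and it follows the same overall route as the paper: reduce semisimple elements to diagonal matrices via upper triangular conjugation, then observe that $n!/\prod_{j}(j!)^{k_j}(k_j!)$ enumerates set partitions of $\{1,\dots,n\}$ with $k_j$ blocks of size $j$, and sum over $\lambda\vdash n$. The difference lies in how the classification of diagonal $z$-classes is justified. The paper's argument is purely combinatorial --- it counts orderings of a multiset modulo relabelling of values of equal multiplicity --- and thereby implicitly takes for granted that two diagonal matrices are $z$-equivalent in $\B_n(k)$ if and only if they induce the same equality pattern (set partition) on positions. The ``if'' direction is immediate from the explicit form of the centralizer $C_P$, but the ``only if'' direction, namely that $gC_Pg^{-1}=C_{P'}$ with $g\in\B_n(k)$ forces $P=P'$, is exactly the point the paper does not address, and your double-centralizer argument supplies it cleanly: $\mathcal{Z}_{\B_n(k)}(C_P)=T_P$, the diagonal part of $g$ normalizes everything in the torus, the unipotent part preserves diagonal entries and hence fixes $T_P$ pointwise, so $T_P=T_{P'}$, and the partition is recovered from the coincidence pattern of the coordinate characters on $T_P$ (using that $k^{\times}$ has enough elements to separate blocks, which is covered by the standing assumption that $k$ is infinite). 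So your write-up is not merely a restyling; it proves the key injectivity step that makes the paper's count a count of $z$-classes rather than just of set partitions.
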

\begin{proof}
	Semisimple elements in $\B_n(k)$ are nothing but diagonals up to conjugacy (for details see the first-page second paragraph in \cite{Ro}). So the number of semisimple $z$-classes in $\B_n(k)$ is equal to the number of $z$-classes of diagonals in $\B_n(k)$. Now we give a combinatorial argument to count the $z$-classes of diagonals in $\B_n(k)$, which is as follows: 
	Let $\lambda=(1^{k_{1}} 2^{k_{2}}\ldots n^{k_{n}})$ be a partition of $n$. Let us consider the following multiset \[M:=\{\underbrace{a_{11}, a_{12},\ldots, a_{1k_1}}_{k_1}; \underbrace{a_{21}, a_{21}, \ldots, a_{2k_2}, a_{2k_2}}_{2k_2}; \ldots, \underbrace{a_{i1},\ldots, a_{i1}, \ldots, a_{ik_i}, \ldots, a_{ik_i}}_{ik_i}; \ldots\}.\]
	So the number of ways to order the above multiset is 
	\[\frac{n!}{(1!)^{k_1}(2!)^{k_2}\cdots (n!)^{k_n}}.\]
	Now look at the action of $S_{k_1}\times S_{k_2}\times \cdots \times S_{k_n}$ on the ordered tuples via 
	\[(\sigma_1\times \sigma_2\times \cdots \times \sigma_n) (\cdots a_{ij} \cdots)=(\cdots a_{i\sigma_i(j)}\cdots),\]
	where $\sigma_i \in S_{k_i}$ and $S_{k_i}$'s are the symmetric groups on $k_i$ symbols.
	Therefore the size of each orbit under the above action is 
	$\prod_{j=1}^{n}(k_j!)$ as the stabilizer is the identity group. 
	So the number of orbits is 
	\[\frac{n!}{\prod_{j=1}^{n}(j!)^{k_j}\prod_{j=1}^{n}(k_j!)}.\]
	Hence the result.
\end{proof}
A numerical example should make the above argument transparent.
\begin{example}
	Let $n=5$, then the number of partitions of $n$ is equal to $7$ and are given by $(5^1), (1^14^1), (2^13^1), (1^23^1), (1^12^2), (1^32^1), (1^5)$. Now  
		\begin{enumerate}
			\item For $\lambda=(5^1)\vdash 5$, the number of semisimple $z$- classes is $\frac{5!}{5!}=1$ and representative is the following: \[\mathrm{diag}\;(\alpha, \alpha, \alpha, \alpha, \alpha).\] 
			\item For $\lambda=(1^14^1)\vdash 5$, the number of semisimple $z$- classes is $\frac{5!}{4!}=5$ and representatives are given by the following: 
			\begin{align*}
			\mathrm{diag}\;(\alpha, \alpha, \alpha, \alpha, \beta);&\hspace{15mm}
			\mathrm{diag}\;(\alpha, \alpha, \alpha, \beta, \alpha);\\
			\mathrm{diag}\;(\alpha, \alpha, \beta, \alpha, \alpha);&\hspace{15mm}
			\mathrm{diag}\;(\alpha, \beta, \alpha, \alpha, \alpha);
			\end{align*}
			\vspace{-12mm}
			\[\mathrm{diag}\;(\beta, \alpha, \alpha, \alpha, \alpha).\]
			\item For $\lambda=(2^13^1)\vdash 5$, the number of semisimple $z$- classes is $\frac{5!}{3!2!}=10$ and representatives are given by the following: 
			\begin{align*}
			\mathrm{diag}\;(\alpha, \alpha, \alpha, \beta, \beta);&\hspace{15mm}
			\mathrm{diag}\;(\alpha, \alpha, \beta, \alpha, \beta);\\
			\mathrm{diag}\;(\alpha, \beta, \alpha, \alpha, \beta);&\hspace{15mm}
			\mathrm{diag}\;(\beta, \alpha, \alpha, \alpha, \beta);\\
			\mathrm{diag}\;(\beta, \alpha, \alpha, \beta, \alpha);&\hspace{15mm}
			\mathrm{diag}\;(\beta, \alpha, \beta, \alpha, \alpha);\\
			\mathrm{diag}\;(\beta, \beta, \alpha, \alpha, \alpha);&\hspace{15mm}
			\mathrm{diag}\;(\alpha, \alpha, \beta, \beta, \alpha);\\
			\mathrm{diag}\;(\alpha, \beta, \beta, \alpha, \alpha);&\hspace{15mm}
			\mathrm{diag}\;(\alpha, \beta, \alpha, \beta, \alpha).
			\end{align*}
			\item For $\lambda=(1^23^1)\vdash 5$, the number of semisimple $z$-classes is $\frac{5!}{(1!)^22!3!}=10$ and  representatives are given by the following: 
			\begin{align*}
			\mathrm{diag}\;(\alpha, \alpha, \alpha, \beta, \gamma); &\hspace{15mm}
			\mathrm{diag}\;(\alpha, \alpha, \beta, \alpha, \gamma); \\
			\mathrm{diag}\;(\alpha, \beta, \alpha, \alpha, \gamma); &\hspace{15mm}
			\mathrm{diag}\;(\beta, \alpha, \alpha, \alpha, \gamma); \\
			\mathrm{diag}\;(\beta, \alpha, \alpha, \gamma, \alpha); &\hspace{15mm}
			\mathrm{diag}\;(\beta, \alpha, \gamma, \alpha, \alpha); \\
			\mathrm{diag}\;(\alpha, \beta, \gamma, \alpha, \alpha); &\hspace{15mm}
			\mathrm{diag}\;(\alpha, \alpha, \beta, \gamma, \alpha); \\
			\mathrm{diag}\;(\alpha, \beta, \alpha, \gamma, \alpha); &\hspace{15mm}
			\mathrm{diag}\;(\beta, \gamma, \alpha, \alpha, \alpha).
			\end{align*}
			\item For $\lambda=(1^12^2)\vdash 5$, the number of semisimple $z$-classes is $\frac{5!}{(2!)^2 2!}=15$ and representatives are given as follows:
			\begin{align*}
			\mathrm{diag}\;(\alpha, \alpha, \beta, \beta, \gamma); &\hspace{10mm}
			\mathrm{diag}\;(\alpha, \beta, \alpha, \beta, \gamma); &
			\mathrm{diag}\;(\beta, \alpha, \alpha, \beta, \gamma);\\
			\mathrm{diag}\;(\beta, \alpha, \beta, \gamma, \alpha); &\hspace{10mm}
			\mathrm{diag}\;(\alpha, \beta, \beta, \gamma, \alpha); &
			\mathrm{diag}\;(\gamma, \alpha, \alpha, \beta, \beta); \\
			\mathrm{diag}\;(\alpha, \alpha, \beta, \gamma, \beta); &\hspace{10mm}
			\mathrm{diag}\;(\alpha, \alpha, \gamma, \beta, \beta); &
			\mathrm{diag}\;(\alpha, \beta, \gamma, \alpha, \beta); \\
			\mathrm{diag}\;(\alpha, \beta, \gamma, \beta, \alpha); &\hspace{10mm}
			\mathrm{diag}\;(\alpha, \gamma, \alpha, \beta, \beta); &
			\mathrm{diag}\;(\alpha, \gamma, \beta, \alpha, \beta); \\
			\mathrm{diag}\;(\alpha, \gamma, \beta, \beta, \alpha); &\hspace{10mm}
			\mathrm{diag}\;(\gamma, \alpha, \beta, \alpha, \beta); &\;
			\mathrm{diag}\;(\gamma, \beta, \alpha, \alpha, \beta).
			\end{align*}
			\item For $\lambda=(1^32^1)\vdash 5$, the number of semisimple $z$-classes is $\frac{5!}{(1!)^33!2!}=10$ and representatives are given by the following: 
			\begin{align*}
			\mathrm{diag}\;(\alpha, \alpha, \beta, \gamma, \delta); &\hspace{15mm}
			\mathrm{diag}\;(\beta, \gamma, \delta, \alpha, \alpha);\\
			\mathrm{diag}\;(\beta, \gamma, \alpha, \delta, \alpha);&\hspace{15mm}
			\mathrm{diag}\;(\beta, \gamma, \alpha, \alpha, \delta);\\
			\mathrm{diag}\;(\beta, \alpha, \gamma, \alpha, \delta);&\hspace{15mm}
			\mathrm{diag}\;(\beta, \alpha, \alpha, \gamma, \delta);\\
			\mathrm{diag}\;(\alpha, \beta, \alpha, \gamma, \delta);&\hspace{15mm}
			\mathrm{diag}\;(\alpha, \beta, \gamma, \alpha, \delta);\\
			\mathrm{diag}\;(\alpha, \beta, \gamma, \delta, \alpha);&\hspace{15mm}
			\mathrm{diag}\;(\beta, \alpha, \gamma, \delta, \alpha).
			\end{align*}
			\item For $\lambda=(1^5)\vdash 5$, the number of semisimple $z$-classes is  $\frac{5!}{(1!)^55!}=1$ and the representative is given as follows: \[\mathrm{diag}\;(\alpha, \beta, \gamma, \delta, \eta).\]	
		\end{enumerate}
		Therefore the total number of semisimple $z$-classes in $\B_5(k)$ is $52$.
	\end{example}
\section{Unipotent \texorpdfstring{$z$}{}-classes in \texorpdfstring{$\B_n(k)$}{}}
Let $\alpha\in k$, define 
\[x_{\alpha}=\begin{pmatrix}
0&1&\alpha&0&0&0\\&0&0&0&1&0\\&&0&1&1&0\\
&&&0&0&0\\&&&&0&1\\&&&&&0\end{pmatrix}.\] 
Let $u_{\alpha}=I_6+x_{\alpha}\in \B_6(k)$ be a unipotent element.
The following result is already known. We record this result as we are going to use it.
\begin{proposition}\label{unicong}\noindent
	\begin{enumerate}
		\item For $2\leq n \leq 5$, the number of unipotent conjugacy classes in $\B_n(k)$ is finite. In particular, the numbers are $2, 5, 16, 60$ for $n=2, 3, 4, 5$ respectively.
		\item For $n\geq 6$, the number of unipotent conjugacy classes in $\B_n(k)$ is infinite.
	\end{enumerate}
\end{proposition}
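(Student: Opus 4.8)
The plan is to reformulate everything as a problem about $\mathrm{B}_n(k)$-orbits on nilpotent matrices. Writing a unipotent element as $u = I_n + N$ with $N$ strictly upper triangular, one has $gug^{-1} = I_n + gNg^{-1}$ for $g \in \mathrm{B}_n(k)$, and $gNg^{-1}$ is again strictly upper triangular because $\mathrm{B}_n(k)$ normalizes the space $\mathfrak n_n$ of strictly upper triangular matrices. Hence two unipotents are conjugate in $\mathrm{B}_n(k)$ if and only if the associated nilpotents $N_1, N_2 \in \mathfrak n_n$ are conjugate under the conjugation action of $\mathrm{B}_n(k) = T_n(k) \ltimes U_n(k)$, where $T_n$ is the diagonal torus and $U_n$ the unitriangular subgroup. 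So counting unipotent conjugacy classes is the same as counting $\mathrm{B}_n(k)$-orbits on $\mathfrak n_n$, equivalently classifying nilpotent operators on $k^n$ that respect the standard complete flag, up to flag-preserving conjugation.

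For part (1) I would run Belitskii's algorithm on a generic $N \in \mathfrak n_n$. Thinking of the entries of $N$ as the data of the matrix problem and of conjugation by $T_n \ltimes U_n$ as the admissible transformations, the algorithm processes the entries in a fixed order (superdiagonal by superdiagonal) and, at each entry, uses the residual freedom in the conjugating group either to normalize it to $0$ or $1$ or to record it as an invariant. For $2 \le n \le 5$ the key fact, which the explicit run in the appendix verifies, is that the procedure terminates after finitely many branches and that no continuous parameter ever survives; enumerating the resulting rigid normal forms gives exactly $2, 5, 16, 60$ classes for $n = 2, 3, 4, 5$. Thus the whole content of (1) is the absence of a modulus in these ranges, settled by the (finite, if tedious) bookkeeping.

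For part (2) the plan is to produce, already for $n = 6$, a one-parameter family of pairwise non-conjugate unipotents, for which I would use the explicit $u_\alpha = I_6 + x_\alpha$ introduced above. Encoding $x_\alpha$ through its action on the flag, $x_\alpha e_1 = 0$, $x_\alpha e_2 = e_1$, $x_\alpha e_3 = \alpha e_1$, $x_\alpha e_4 = e_3$, $x_\alpha e_5 = e_2 + e_3$, $x_\alpha e_6 = e_5$, I would write the conjugacy condition $g\, x_\alpha = x_\beta\, g$ as relations among the columns $f_j = g e_j$ of an invertible upper triangular $g$. Reading these relations off the flag step by step forces, in turn, $g_{11} = g_{22} = g_{33} = g_{44} = g_{55} = g_{66}$; the chain $x_\beta e_4 = e_3$, $x_\beta e_6 = e_5$ together with the coupling $x_\beta e_5 = e_2 + e_3$ then forces $g_{45}$ to vanish, because $\operatorname{Im} x_\beta$ has no $e_4$-component, and comparing the $e_1$-components finally yields $\beta = \alpha$. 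Hence $u_\alpha \sim_{\mathrm{B}_6} u_\beta$ if and only if $\alpha = \beta$, so over the infinite field $k$ the family $\{u_\alpha\}$ meets infinitely many conjugacy classes.

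For general $n > 6$ I would embed the family by $u_\alpha \oplus I_{n-6}$, placing $x_\alpha$ in the top-left $6 \times 6$ block and zeros elsewhere. Since any $g \in \mathrm{B}_n(k)$ preserves the flag, and in particular the subspace $V_6 = \langle e_1, \ldots, e_6\rangle$, which is invariant under these padded nilpotents, restriction to $V_6$ sends a conjugacy in $\mathrm{B}_n(k)$ to a conjugacy in $\mathrm{B}_6(k)$ of the original $x_\alpha, x_\beta$. Therefore $u_\alpha \oplus I_{n-6} \sim_{\mathrm{B}_n} u_\beta \oplus I_{n-6}$ implies $\alpha = \beta$, and infiniteness propagates to all $n \ge 6$. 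The main obstacle is the calculation in part (2): one must ensure that the invertibility constraint on $g$, combined with all six flag relations simultaneously, genuinely rigidifies the parameter rather than letting a shear absorb it, and it is exactly the simultaneous presence of the branch $e_4 \to e_3$, the branch $e_6 \to e_5$, and the merging relation $e_5 \mapsto e_2 + e_3$ that first makes this possible at $n = 6$.
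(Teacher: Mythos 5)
Your proposal is correct and takes essentially the same route as the paper: part (1) is settled by running Belitskii's algorithm and enumerating the rigid normal forms for $2\le n\le 5$ (the paper's appendix), and part (2) uses the very same family $u_\alpha$, solving $Px_\alpha=x_\beta P$ for upper triangular $P$ to force $\alpha=\beta$ and then padding with an identity block for $n>6$. Your column-by-column flag reading is just a coordinate-free rephrasing of the paper's entry-wise relations $p_{23}=p_{45}=0$, $p_{11}=p_{22}=p_{55}=p_{33}=p_{44}$, $\alpha p_{11}=p_{23}+\beta p_{33}$.
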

\begin{proof}
	\begin{enumerate}
		\item Use Belitskii's algorithm, for details see~\cite{CXLF},~\cite{Ko} and~\cite{Th}. Also, see the appendix for explicit calculations of the number of unipotent conjugacy classes. We also give representatives of the unipotent conjugacy classes in $\B_n(k)$ for $2\leq n\leq 5$.
		\item The proof was originally given by M. Roitman in~\cite{Ro} for $n=12$. Then latter Djokovic and Malzan,  in~\cite{DM}, proved this for $n=6$ and in fact, it is the minimum value for which this happens to be true by part (1). For completeness, we will give this prove again.
		It is enough to prove this for $n=6$. Let $u_{\alpha}$ and $u_{\beta}$ are conjugate in $\B_6(k)$, i.e., 
		$Pu_{\alpha}P^{-1}=u_{\beta}$ for some $P\in \B_6(k)$. Then 
		$Px_{\alpha}=x_{\beta}P$. Let $P=(p_{ij})$, then we get the following:
		\begin{align*}
		p_{23}&= p_{45}=0\\
		p_{11}&= p_{22}\\
		p_{22}&= p_{55}\\ 
		p_{55}&= p_{33}\\
		\alpha p_{11}&= p_{23}+\beta p_{33}.
		\end{align*}
		Therefore from the above equation, we get $\alpha=\beta$. So the number of unipotent conjugacy classes in $\B_6(k)$ is infinite as $k$ is an infinite field. Hence it is true for $\B_n(k)$ provided $n\geq 6$.
	\end{enumerate}
\end{proof}
\begin{corollary}\label{unipotentz1}
	For $2\leq n \leq 5$, the number of unipotent $z$-classes in $\B_n(k)$ is finite.
\end{corollary}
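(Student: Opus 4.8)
The plan is to deduce this immediately from Proposition~\ref{unicong}(1) together with the elementary fact, recorded in the introduction, that conjugate elements are always $z$-equivalent. First I would fix $n$ with $2\leq n\leq 5$ and let $U$ denote the set of unipotent elements of $\B_n(k)$, partitioned by the relation $\sim_z$ restricted to $U$. Since $x\sim_z y$ whenever $x$ and $y$ are conjugate, the partition of $U$ into $z$-classes is coarser than its partition into conjugacy classes. Concretely, the assignment sending a unipotent conjugacy class to the unique unipotent $z$-class containing it is a well-defined surjection from the set of unipotent conjugacy classes onto the set of unipotent $z$-classes.

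Next I would invoke Proposition~\ref{unicong}(1), which asserts that for these values of $n$ the number of unipotent conjugacy classes in $\B_n(k)$ is finite (in fact $2,5,16,60$ for $n=2,3,4,5$). Because a surjection cannot increase cardinality, the number of unipotent $z$-classes is bounded above by the number of unipotent conjugacy classes, and hence is finite. This completes the counting argument.

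The only point requiring genuine care — and thus the mild obstacle — is the interpretation of the phrase \emph{unipotent $z$-class}: one must read it as a $z$-equivalence class of unipotent elements, i.e. the relation $\sim_z$ restricted to $U$, rather than as an arbitrary $z$-class of $\B_n(k)$ that happens to meet $U$. Under that reading the surjection above is exactly the right object and the conclusion follows at once; the entire mathematical content is supplied by the finiteness statement of Proposition~\ref{unicong}(1), so no further computation is needed.
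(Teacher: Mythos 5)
Your proposal is correct and follows exactly the paper's argument: the paper's proof is the one-line observation that conjugate elements are $z$-equivalent, so finiteness of unipotent $z$-classes follows from the finiteness of unipotent conjugacy classes in Proposition~\ref{unicong}(1). Your more careful phrasing via the surjection from conjugacy classes onto $z$-classes is just an expanded version of the same reasoning.
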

\begin{proof}
	If two elements are conjugate, then they are also $z$-conjugate. So this follows from the first part of Proposition~\ref{unicong}.
\end{proof}
Now the centralizer of $u_{\alpha}$,  $\mathcal{Z}_{\B_6(k)}(u_{\alpha})$ is the following:
\[\left\{ \begin{pmatrix}
a&b_1&b_2&b_3&b_4&b_5\\&a&0&b_2-\alpha b_6&b_1+b_2-\alpha b_7&b_4-\alpha b_8\\
&&a&b_6&b_7&b_8\\&&&a&0&(\alpha+1)b_7-b_1-b_2\\
&&&&a&b_1+b_2-\alpha b_7\\
&&&&&a\end{pmatrix}\mid a\in k^{\times}, b_i\in k \right\}.\]
\begin{lemma}\label{unipotentz2}
	For $n\geq 6$, the number of unipotent $z$-classes in $\B_n(k)$ is infinite.
\end{lemma}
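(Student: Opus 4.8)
The plan is to produce an explicit infinite family of unipotent elements of $\B_6(k)$ lying in infinitely many distinct $z$-classes, and then to pass to $n\ge 6$. The natural candidates are the $u_\alpha$ of Proposition~\ref{unicong}, which are pairwise non-conjugate in $\B_6(k)$. Write $Z_\alpha:=\mathcal{Z}_{\B_6(k)}(u_\alpha)$ for the displayed centralizer. Rather than trying to show that the $Z_\alpha$ are pairwise non-conjugate (which may fail for sporadic pairs), I would prove the weaker but sufficient statement that the map sending $\alpha$ to the $z$-class of $u_\alpha$ is finite-to-one: since Proposition~\ref{unicong} gives infinitely many conjugacy classes $[u_\alpha]$, this alone forces infinitely many $z$-classes.

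The mechanism is the double centralizer. Because $u_\alpha\in Z_\alpha$, anything centralizing all of $Z_\alpha$ centralizes $u_\alpha$, so $\mathcal{Z}_{\B_6(k)}(Z_\alpha)\subseteq Z_\alpha$; hence $\mathcal{Z}_{\B_6(k)}(Z_\alpha)$ is exactly the centre of the group $Z_\alpha$, and it contains $u_\alpha$. Now suppose $u_\alpha\sim_z u_\beta$, say $gZ_\alpha g^{-1}=Z_\beta$ with $g\in\B_6(k)$, and put $w:=gu_\alpha g^{-1}$. Then $\mathcal{Z}_{\B_6(k)}(w)=gZ_\alpha g^{-1}=Z_\beta$, so $w$ lies in the centre of $Z_\beta$, while $w$ is conjugate to $u_\alpha$. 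Thus \emph{every} $u_\alpha$ that is $z$-equivalent to the fixed $u_\beta$ has a conjugate sitting inside the single subgroup $\mathcal{Z}_{\B_6(k)}(Z_\beta)$. As distinct $\alpha$ yield non-conjugate $u_\alpha$, the number of such $\alpha$ is bounded by the number of $\B_6(k)$-conjugacy classes meeting this fixed subgroup. It therefore suffices to show that the centre of $Z_\beta$ meets only finitely many of the classes $[u_\gamma]$.

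To this end I would compute the centre of $Z_\beta$ directly from the displayed matrix. One finds it is $k^\times I$ times a three-dimensional abelian unipotent group whose Lie algebra is spanned by $x_\beta$, by the highest root vector $E_{16}$ (which spans the centre of the whole Lie algebra $\mathfrak b_6$ of $\B_6(k)$ and is $\mathrm{Ad}$-invariant up to scalars), and by one further explicit vector depending linearly on $\beta$. Writing $\B_6(k)$ as the semidirect product of its diagonal torus and unipotent radical, unipotent conjugation fixes superdiagonal entries and the torus merely rescales them, so that conjugation acts on this three-dimensional centre in a very constrained way. The goal is then to show that the conjugacy invariant which separates the $u_\gamma$ (the modulus $\gamma$ extracted in the proof of Proposition~\ref{unicong}) takes only finitely many values on this centre; analysing the superdiagonal together with the next off-diagonal entries should pin the modulus to $\beta$ up to finitely many possibilities, forcing $\gamma=\beta$ (indeed one expects exactly $\gamma=\beta$, which would give injectivity, but finiteness already suffices). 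This completes the count for $n=6$, and with Corollary~\ref{unipotentz1} it gives the dichotomy at $n=6$.

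The passage to arbitrary $n\ge 6$ is then carried out by the same block argument used for the conjugacy classes in Proposition~\ref{unicong}: embedding $\B_6(k)$ as the top-left block of $\B_n(k)$ and padding by the identity, one checks that a $z$-equivalence of padded elements descends to the $6\times 6$ blocks, the point being that conjugation in $\B_n(k)$ is compatible with the projection onto the top-left block. I expect the main obstacle to be precisely the modulus-rigidity of the previous paragraph: because $z$-equivalence only requires the centralizers, and not the elements, to be conjugate, one cannot invoke Proposition~\ref{unicong} directly, and the genuine content is to verify that the parameter $\alpha$ persists as an invariant of the subgroup $Z_\alpha$ and is not absorbed by the extra freedom of conjugating subgroups in place of points, i.e.\ that the centre of the centralizer does not acquire a continuous family of pairwise non-conjugate $u_\gamma$-type elements.
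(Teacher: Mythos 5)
Your reduction is logically sound as far as it goes: the observation that $\mathcal{Z}_{\B_6(k)}(Z_\beta)\subseteq Z_\beta$ (because $u_\beta\in Z_\beta$), hence equals the centre of $Z_\beta$, and that a $z$-equivalence $gZ_\alpha g^{-1}=Z_\beta$ forces $gu_\alpha g^{-1}$ to lie in that centre, is correct and is a genuinely different (and rather elegant) way to set up the problem compared with the paper. The block-matrix reduction from $n>6$ to $n=6$ is also exactly what the paper does.

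However, there is a real gap: the entire weight of the argument rests on the claim that the centre of $Z_\beta$ meets only finitely many of the conjugacy classes $[u_\gamma]$, and you never establish it. You describe the centre ("one finds it is $k^\times I$ times a three-dimensional abelian unipotent group\dots") without computing it, and the decisive step is phrased as ``the goal is then to show\dots'' and ``should pin the modulus to $\beta$ up to finitely many possibilities.'' A three-parameter unipotent family could a priori contain a curve of pairwise non-conjugate $u_\gamma$-type elements, which is precisely the scenario you yourself flag as ``the genuine content'' in your last paragraph --- so the proof is not complete until that computation is carried out. By contrast, the paper avoids the detour through the double centralizer entirely: it takes the explicit parametrized form of a generic $A\in Z_\alpha$, writes out $PA=A'P$ entry by entry for $P\in\B_6(k)$ and $A'\in Z_\beta$, and by specializing the parameters $b_i$ (e.g.\ $b_1\neq0$, $b_2=b_6=0$, etc.) extracts $p_{23}=0$, $\alpha p_{22}=\beta p_{33}$ and $p_{22}=p_{33}$, hence $\alpha=\beta$ outright. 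That direct computation is of roughly the same difficulty as the one you defer, so your route does not actually save work; to complete your version you must either compute the centre of $Z_\beta$ and the Belitskii modulus of its elements explicitly, or fall back on the paper's direct comparison of $Z_\alpha$ and $Z_\beta$.
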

\begin{proof}
	It is enough to prove this lemma for $n=6$. Now assume that $n=6$. Suppose that $u_{\alpha}$ and $u_{\beta}$ are $z$-conjugate, then $P\mathcal{Z}_{\B_6(k)}(u_{\alpha})P^{-1}=
	\mathcal{Z}_{\B_6(k)}(u_{\beta})$ for some $P\in \B_{6}(k)$. 
	
	\noindent
	\underline{\textbf{Claim}:} $\alpha=\beta$. 
	
	Now $PAP^{-1}\in \mathcal{Z}_{\B_6(k)}(u_{\beta})$ for all $A\in \mathcal{Z}_{\B_6(k)}(u_{\alpha})$. So $PAP^{-1}=A'$ for some $A' \in \mathcal{Z}_{\B_6(k)}(u_{\beta})$. Observe that two upper triangular matrices are conjugate via a upper triangular matrix implies that they have the same diagonal entries. Let $P=(p_{ij})\in \B_6(k)$ and $A, A'$ have the form described as above. Then we get 
	\begin{align}
	a'&= a\\
	b_1'&= b_1p_{11}p_{22}^{-1}\\
	b_2'&= (b_2p_{11}-b_1p_{11}p_{23}p_{22}^{-1})p_{33}^{-1}\\
	b_6'&= b_6p_{33}p_{44}^{-1}\\
	b_7'&= (b_7p_{33}-b_6p_{33}p_{45}p_{44}^{-1})p_{55}^{-1}.
	\end{align}
	\begin{align}\label{eq6}
	(b_2-\alpha b_6)p_{22}+b_6p_{23}&= (b_2'-\beta b_6')p_{44}.
	\end{align}
	\begin{align}\label{eq7}
	((\alpha+1)b_7-b_1-b_2)p_{44}+(b_1+b_2-\alpha b_7)p_{45}&= 
	((\beta+1)b_7'-b_1'-b_2')p_{66}.
	\end{align}
	From Equation (\ref{eq6}), we get, $p_{23}=0$ and 
	$\alpha p_{22}=\beta p_{33}$.
	
	(If $b_6=b_2=0$ and $b_1\neq 0,$ then 
	$b_6'=0$ and 
	$b_2'=-b_1p_{11}p_{23}p_{22}^{-1}p_{33}^{-1}$. So \[-b_1p_{11}p_{23}p_{22}^{-1}p_{33}^{-1}p_{44}=0,\] hence $p_{23}=0$, since $b_1\neq 0$ and $p_{ii}\neq 0$. And if 
	$b_1=b_2=0$ and $b_6\neq 0$, then $b_1'=0=b_2'$ and $ b_6'= b_6p_{33}p_{44}^{-1}$. So \[-\alpha b_6p_{22}+b_6p_{23}=-\beta b_6p_{44}p_{33}p_{44}^{-1}.\] Now, since $p_{23}=0$, we get $\alpha b_6p_{22}=\beta b_6a_{33}$, which implies $\alpha p_{22}=\beta p_{33}$, as $b_6\neq 0$). 
	
	From Equation (\ref{eq7}), we get, $p_{22}=p_{33}$.
	
	(If $b_1=b_6=b_7=0$ and $ b_2\neq 0,$ then $b_1'=b_6'=b_7'=0$ and 
	$ b_2'=b_2p_{11}p_{33}^{-1}$. So from Equation (\ref{eq6}) we get \[p_{44}-p_{45}=p_{11}p_{66}p_{33}^{-1},\] since $b_2\neq 0$. Again if $b_2=b_6=b_7=0$ and $ b_1\neq 0$, then $b_6'=b_7'=0$ and 
	$b_1'= b_1p_{11}p_{22}^{-1};  b_2'=-b_1p_{11}p_{23}p_{22}^{-1}p_{33}^{-1}=0$, as $p_{23}=0$. 
	So again from Equation (\ref{eq6}) we get, 
	\[p_{44}-p_{45}=p_{11}p_{66}p_{22}^{-1},\] since $b_1\neq 0$. 
	Therefore from the above two we get $p_{22}=p_{33}$). 
	Hence $\alpha=\beta$. Therefore the result is true for $n=6$. Now for $n >6$. Let \[U_{\alpha}:=\left(\begin{array}{c|c}
	u_{\alpha}&0\\
	\hline
	0&I_{n-6}
	\end{array}\right),\, U_{\beta}:=\left(\begin{array}{c|c}
	u_{\beta}&0\\
	\hline
	0&I_{n-6}
	\end{array}\right)\in \B_n(k)\] 
	be two unipotent elements which are $z$-conjugate in $\B_n(k)$. Then \[Q\mathcal{Z}_{\B_n(k)}(U_{\alpha})Q^{-1}=\mathcal{Z}_{\B_n(k)}(U_{\beta})\]
	for some $Q\in \B_n(k)$. Therefore $QCQ^{-1}=C'$ for some $C\in \mathcal{Z}_{\B_n(k)}(U_{\alpha})$ and $C' \in \mathcal{Z}_{\B_n(k)}(U_{\beta})$. Now write $Q, C$ and $C'$ in block form, we get 
	\[\left(\begin{array}{c|c}
	P&*\\
	\hline
	0&*
	\end{array}\right)
	\left(\begin{array}{c|c}
	A&*\\
	\hline
	0&*
	\end{array}\right)=\left(\begin{array}{c|c}
	A'&*\\
	\hline
	0&*
	\end{array}\right)
	\left(\begin{array}{c|c}
	P&*\\
	\hline
	0&*
	\end{array}\right)\]
	for some $P, A, A'\in \B_6(k)$. Hence $PA=A'P$, which reduces to the case of $n=6$. Therefore the number of unipotent $z$-classes in $\B_n(k)$ ($n\geq 6$) is infinite.
\end{proof}
\begin{corollary}
The unipotent $z$-classes for $\B_6(k)$ is parametrized by elements of the field $k$.
\end{corollary}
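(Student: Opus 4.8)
The plan is to read the corollary as the cardinality assertion that the set $\mathcal{U}$ of unipotent $z$-classes in $\B_6(k)$ is in bijection with $k$, and to prove it by bounding $|\mathcal{U}|$ from below and above by $|k|$. The lower bound is supplied by Lemma~\ref{unipotentz2}, while the upper bound comes from a crude count of the unipotent locus combined with the standing hypothesis that $k$ is infinite.

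For the lower bound, consider the map $\Phi\colon k \to \mathcal{U}$ sending $\alpha$ to the $z$-class of $u_{\alpha}=I_6+x_{\alpha}$. In the proof of Lemma~\ref{unipotentz2} we established that $u_{\alpha}\sim_z u_{\beta}$ forces $\alpha=\beta$, so distinct parameters yield distinct $z$-classes and $\Phi$ is injective; hence $|\mathcal{U}|\geq|k|$.

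For the upper bound, recall that a unipotent element of $\B_6(k)$ is exactly a matrix $I_6+N$ with $N$ strictly upper triangular, and such an $N$ is determined by its $\binom{6}{2}=15$ entries above the diagonal, each ranging freely over $k$. Thus the set of unipotent elements has cardinality $|k|^{15}$, which equals $|k|$ because $k$ is infinite. Since every unipotent $z$-class is, by definition, the $z$-class of some unipotent element, sending a unipotent element to its $z$-class defines a surjection onto $\mathcal{U}$, whence $|\mathcal{U}|\leq|k|^{15}=|k|$. Combining the two estimates via the Cantor--Schr\"{o}der--Bernstein theorem gives the desired bijection $\mathcal{U}\leftrightarrow k$.

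The only genuine content beyond Lemma~\ref{unipotentz2} is the cardinal identity $|k|^{15}=|k|$, which holds because $|k|^{m}=|k|$ for every finite $m\geq1$ when $k$ is infinite; this is also the precise place where the infiniteness of $k$ enters, matching the fact that over a finite field the unipotent $z$-classes would instead be finite in number. I expect no serious obstacle here: the map $\Phi$ is injective but not surjective, so the real substance of the statement is the equicardinality $|\mathcal{U}|=|k|$ rather than a canonical parametrization, and both required inequalities are already in hand.
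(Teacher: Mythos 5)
Your proof is correct as a cardinality statement, but it takes a genuinely different route from the paper. The paper gets the parametrization canonically: it invokes Belitskii's algorithm to reduce an arbitrary unipotent $u\in \B_6(k)$ to the canonical form $u_{\alpha}$ for some $\alpha\in k$, and then quotes Lemma~\ref{unipotentz2} ($u_{\alpha}\sim_z u_{\beta}$ if and only if $\alpha=\beta$) to conclude that $\alpha\mapsto u_{\alpha}$ induces the parametrization --- surjectivity from the canonical-form reduction, injectivity from the lemma. You keep the injectivity half verbatim but replace the Belitskii reduction by the crude bound $|\mathcal{U}|\leq |k|^{15}=|k|$ together with Cantor--Schr\"oder--Bernstein. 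What each approach buys: the paper's argument, when it works, produces an explicit one-parameter family of representatives, a stronger and constructive statement; yours yields only an abstract, non-canonical bijection. On the other hand, your argument is more robust, because the paper's reduction claim read literally (``every unipotent $u$ satisfies $bub^{-1}=u_{\alpha}$'') is an overstatement: the identity, for instance, is conjugate and even $z$-equivalent to no $u_{\alpha}$ (its centralizer is all of $\B_6(k)$, while $\mathcal{Z}_{\B_6(k)}(u_{\alpha})$ is a proper subgroup), so $\{u_{\alpha}\}$ is only one of several families of canonical forms and the map $\alpha\mapsto u_{\alpha}$ is not literally surjective onto all unipotent $z$-classes. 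Your equicardinality reading of ``parametrized'' is therefore the defensible one, and your proof of it is complete; just note explicitly that it establishes $|\mathcal{U}|=|k|$ rather than the bijectivity of the specific map $\alpha\mapsto u_{\alpha}$ that the paper's proof asserts, and that the cardinal identity $|k|^{15}=|k|$ uses the standing hypothesis that $k$ is infinite (with the usual set-theoretic conventions), exactly as you observe.
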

\begin{proof}
Let $u$ be a unipotent element of $\B_6(k)$. Then using Belitskii's algorithm (see appendix and~\cite{Ko} for details) we get $bub^{-1}=u_{\alpha}$ for some $b\in \B_6(k)$ and for some $\alpha \in k$, where $u_{\alpha}$ is defined at the beginning of this section. Again from Lemma~\ref{unipotentz2} we have $u_{\alpha}\sim_{z} u_{\beta}$ if and only if $\alpha=\beta$, where $\alpha, \beta \in k$. Therefore unipotent $z$-classes in $\B_6(k)$ are completely determined by the elements of $k$ via the map $\alpha \mapsto u_{\alpha}$.
\end{proof}
\section{\texorpdfstring{$z$}{}-classes in \texorpdfstring{$\B_n(k)$}{}}
\begin{lemma}\label{techlemma}
	Let $g\in G$ and $g=g_sg_u$ be the Jordan decomposition of $g$, and $\alpha \in G$. Then we have $$\alpha \mathcal{Z}_{\mathcal{Z}_G(g_s)}(g_u)\alpha^{-1}
	=\mathcal{Z}_{\alpha \mathcal{Z}_G(g_s)\alpha^{-1}}(\alpha g_u\alpha^{-1}).$$
\end{lemma}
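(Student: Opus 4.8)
The plan is to recognize that this identity is purely formal: it does not use the specific nature of the Jordan decomposition beyond the single fact that $g_s$ and $g_u$ commute. Writing $H := \mathcal{Z}_G(g_s)$ and $x := g_u$, the statement to be proved becomes the general group-theoretic identity
\[
\alpha\, \mathcal{Z}_H(x)\, \alpha^{-1} = \mathcal{Z}_{\alpha H \alpha^{-1}}(\alpha x \alpha^{-1}),
\]
valid for any subgroup $H\leq G$ and any element $x\in G$ with $x\in H$. The first thing I would check is that the inner centralizer $\mathcal{Z}_H(x)$ makes sense, i.e.\ that $x=g_u$ indeed lies in $H=\mathcal{Z}_G(g_s)$; this is immediate, since $g_s$ and $g_u$ commute by the definition of the Jordan decomposition.

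I would then establish the identity by a direct double inclusion. For the forward inclusion, take $y\in \alpha\,\mathcal{Z}_H(x)\,\alpha^{-1}$, so $y=\alpha h \alpha^{-1}$ with $h\in H$ and $hx=xh$. Clearly $y\in \alpha H \alpha^{-1}$, and the one-line conjugation computation $y(\alpha x \alpha^{-1}) = \alpha(hx)\alpha^{-1} = \alpha(xh)\alpha^{-1} = (\alpha x \alpha^{-1})y$ shows that $y$ centralizes $\alpha x \alpha^{-1}$; hence $y$ lies in the right-hand side. For the reverse inclusion, take $y$ in the right-hand side, write $y=\alpha h \alpha^{-1}$ with $h\in H$ (possible since $y\in \alpha H \alpha^{-1}$), and unwind the commutation relation $y(\alpha x \alpha^{-1}) = (\alpha x \alpha^{-1})y$ by cancelling the outer $\alpha$ and $\alpha^{-1}$ to obtain $hx=xh$, so that $h\in \mathcal{Z}_H(x)$ and $y\in \alpha\,\mathcal{Z}_H(x)\,\alpha^{-1}$.

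There is no genuine obstacle here. The entire content is the elementary fact that conjugation by $\alpha$ is an automorphism of $G$ which carries $H$ to $\alpha H \alpha^{-1}$ and $x$ to $\alpha x \alpha^{-1}$, and that such an automorphism commutes with the formation of centralizers inside subgroups. The only point requiring a moment's care is the well-definedness of $\mathcal{Z}_H(x)$ noted above, and this is precisely what the commuting of $g_s$ and $g_u$ guarantees.
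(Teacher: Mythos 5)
Your proof is correct and follows essentially the same route as the paper: a direct double inclusion showing that conjugation by $\alpha$ carries the inner centralizer to the centralizer of the conjugated data. The only cosmetic difference is that you abstract $\mathcal{Z}_G(g_s)$ to a general subgroup $H$ and note explicitly that $g_u\in H$, which the paper leaves implicit; the computation itself is identical.
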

\begin{proof}
	Let $x\in \mathcal{Z}_{\mathcal{Z}_G(g_s)}(g_u)$ then $xg_s=g_sx$ and $xg_u=g_ux$. Therefore $\alpha xg_u\alpha^{-1}=\alpha g_u x\alpha^{-1}$ implies that $(\alpha x\alpha^{-1})(\alpha g_u \alpha^{-1})=(\alpha g_u \alpha^{-1})(\alpha x\alpha^{-1})$. Therefore $\alpha x\alpha^{-1}\in \mathcal{Z}_{\alpha \mathcal{Z}_G(g_s)\alpha^{-1}}(\alpha g_u\alpha^{-1})$.
	
	On the other hand let $y\in \mathcal{Z}_{\alpha \mathcal{Z}_G(g_s)\alpha^{-1}}(\alpha g_u\alpha^{-1})$, then $(\alpha^{-1}y \alpha)g_s=g_s(\alpha^{-1}y\alpha)$ and $y(\alpha g_u\alpha^{-1})=(\alpha g_u\alpha^{-1})y$. Now the last equation is same as $(\alpha^{-1}y\alpha)g_u=g_u(\alpha^{-1}y\alpha)$. So $\mathcal{Z}_{\alpha \mathcal{Z}_G(g_s)\alpha^{-1}}(\alpha g_u\alpha^{-1})\subseteq \alpha \mathcal{Z}_{\mathcal{Z}_G(g_s)}(g_u)\alpha^{-1}$. Hence the result.
\end{proof}
\begin{remark}\label{remark}
	Let us assume that the number of semisimple $z$-classes in $G$ is $n$ and representatives are given by $s_1, s_2, \ldots, s_n$. Let $g\in G$ then $g=g_sg_u$ is the Jordan decomposition of $g$. By the above assumption, $g_s$ will be $z$-conjugate to $s_i$ for some $i=1, 2, \ldots, n$. Without loss of generality, say $g_s\sim_z s_1$, i.e., $\alpha \mathcal{Z}_G(g_s)\alpha^{-1}=\mathcal{Z}_G(s_1)$ for some $\alpha \in G$. Then 
	\[\alpha \mathcal{Z}_G(g)\alpha^{-1}=\alpha \mathcal{Z}_{\mathcal{Z}_G(g_s)}(g_u)\alpha^{-1}
	=\mathcal{Z}_{\mathcal{Z}_G(s_1)}(\alpha g_u\alpha^{-1}).\] 
	The first equality follows from the uniqueness of the Jordan decomposition, i.e., \[\mathcal{Z}_G(g)=\mathcal{Z}_G(g_s)\cap \mathcal{Z}_G(g_u)=\mathcal{Z}_{\mathcal{Z}_G(g_s)}(g_u),\] and the second equality follows from Lemma \ref{techlemma}. 
	Now if the number of unipotent $z$-classes in $\mathcal{Z}_G(s_i)$ is finite for all $i=1,2,\ldots, n$. Then the number of $z$-classes in $G$ is finite. So the upshot is the following:
	
	If we know that the number of semisimple $z$-classes in $G$ is finite, and the number of unipotent $z$-classes in centralizer of semisimple elements is finite, then the 
  number of $z$-classes in $G$ is finite.
\end{remark}
\subsection{Proof of the Theorem \ref{maintheorem}:}
\begin{enumerate}
\item The number of semisimple $z$-classes in $\B_n(k)$  is finite follows from Proposition \ref{semisimplez}. The number of unipotent $z$-classes in $\B_n(k)$ is finite for $2\leq n\leq 5$ follows from Corollary \ref{unipotentz1}. So the number of $z$-classes in $\B_n(k)$, for $2\leq n \leq 5$, is finite follows from Lemma \ref{techlemma} (see also Remark \ref{remark}).
\item The number of unipotent $z$-classes in $\B_n(k)$ is infinite for $n\geq 6$ follows from Lemma \ref{unipotentz2}. Therefore the number of $z$-classes in $\B_n(k)$ is infinite provided $n\geq 6$.
\end{enumerate}
\section{Appendix}
Two matrices $A, B \in \B_n(k)$ are said to be \textbf{conjugate} if $B=PAP^{-1}$ for some $P\in \B_n(k)$. Here we are following \cite{Ko}.

\noindent
\underline{\textbf{Belitskii's Algorithm for $\B_n(k)$}:} 

\noindent
Let $A=(a_{ij})\in \B_n(k)$. Elements of the matrix $A$ are ordered by \[a_{nn};a_{n-1 n-1}, a_{n-1 n};\ldots; a_{11}, a_{12}, \ldots, a_{1n},\] i.e., a sequence from bottom to top and in each row from left to right. 

\noindent
\underline{\textbf{AIM}:} The aim of this algorithm is to simplify the first entry in the above sequence, then the second entry and so on. By ``simplifying" we mean replacing the entry by $0$ or $1$ (conjugating the matrix $A$ by upper triangular matrices) if possible. If not, then we continue with the next entry in the above sequence. At each step, we take care not to disturb any of the reductions obtained so far. 

Let $e_{ij}(\alpha)$ be an elementary matrix, 
with $(i,j)^{\mathrm{th}}$ element equal to $\alpha$ and $0$ everywhere else. 
Two matrices $A$ and $ B$ are conjugate if and only if one reduces to the other by a sequence of the following two elementary transformations: 
\begin{enumerate}
	\item Multiply row $i$ by $\alpha \neq 0$, then multiply column $i$ by $\alpha^{-1}$; the elementary transformations can be obtained as $A\mapsto PAP^{-1}$, $P=I+e_{ii}(\alpha -1)$.
	\item For $i<j$, multiply $I+e_{ij}(\alpha)$ from the left; then multiply $I+e_{ij}(-\alpha)$ from the right; the elementary transformations can be obtained as $A\mapsto PAP^{-1}$, $P=I+e_{ij}(\alpha)$.
\end{enumerate}
\vspace{10mm}
\underline{\textbf{Algorithm for $\B_n(k)\; (2\leq n \leq 5)$:}}

\noindent
\textbf{Step 1:} Let $a_{pq}$ be the first unreduced entry of $A$. If the column of $a_{pq}$ contains an entry $a_{iq}=1$ located under $a_{pq}$ (i.e., $i>p$) and $a_{iq}$ is the first nonzero entry in the row, then $a_{pq}=0$ by transformation of the type (2) with $P=I+e_{pi}(-a_{pq})$.

\noindent
\textbf{Step 2:} Suppose that the column of $a_{pq}$ does not contain such an entry $a_{iq}$. If $a_{pq}$ is the first nonzero entry of that row, then $a_{pq}=1$ by transformation of the type (1) with $P=I+e_{pp}(a_{pq}^{-1}-1)$.

\noindent
\textbf{Step 3:} Suppose $a_{pr}=1$ is the first nonzero entry in the row of $a_{pq}$, then $a_{pq}=0$ by the transformation of the type (2) with $P=I+e_{rq}(a_{pq})$. But this might disturb the row $a_{r*}$, which was reduced before. However, this does not happen if the row $a_{q*}$ is zero.

\noindent
\textbf{Step 4:} If $a_{ri}=1=a_{qj}$ are the first nonzero entries of corresponding rows and $i<j$, then the above transformation by $P=I+e_{rq}(a_{pq})$ disturbs the row of $a_{ri}$, which can be restored by the transformation of the type (2) with $P=I+e_{ij}(a_{pq})$  (this transformation does not disturb the reduced entries since the matrix is $5\times 5$ this need not be true for $6\times 6$ matrices).
In this case, we get $a_{pq}=0$.  

\noindent
\textbf{Step 5:} If $a_{qj}$ is the first nonzero entry of the row and $a_{ri}=0$ for all $i<j$, then $a_{pq}=1$ by transformation of the type (1) with 
$P=I+e_{qq}(a_{pq}-1)$. But this transformation disturbs row of $a_{qj}$ by changing $a_{qj}=1$ into $a_{qj}:=a_{pq}$. This can be restored by a transformation of type (1) with 
$P=I+e_{jj}(a_{pq}-1)$. The latter transformation does not disturb already reduced entries if $j^{\text{th}}$ row is zero. If $j^{\text{th}}$ row is not zero, then we have $j=4$ since the dimension is $5$ and the element $a_{45}=1$ was changed to $a_{45}:=a_{pq}$. This can be restored by a transformation of the type (1) with $P=I+e_{55}(a_{pq}-1)$.

Here we use Belitskii's algorithm (for details see~\cite{Ko}) for unipotent elements. 
Number of unipotent conjugacy classes in $\B_5(k)$ is $60$ and representatives are the following:
\[\begin{pmatrix}1&1&&&\\&1&&&\\&&1&&\\
&&&1&\\&&&&1\end{pmatrix}
\begin{pmatrix}1&&1&&\\&1&&&\\&&1&&\\
&&&1&\\&&&&1\end{pmatrix}
\begin{pmatrix}1&&&1&\\&1&&&\\&&1&&\\
&&&1&\\&&&&1\end{pmatrix}
\begin{pmatrix}1&&&&1\\&1&&&\\&&1&&\\
&&&1&\\&&&&1\end{pmatrix}\]
\[\begin{pmatrix}1&&&&\\&1&1&&\\&&1&&\\
&&&1&\\&&&&1\end{pmatrix}
\begin{pmatrix}1&&&&\\&1&&1&\\&&1&&\\
&&&1&\\&&&&1\end{pmatrix}
\begin{pmatrix}1&&&&\\&1&&&1\\&&1&&\\
&&&1&\\&&&&1\end{pmatrix}
\begin{pmatrix}1&&&&\\&1&&&\\&&1&1&\\
&&&1&\\&&&&1\end{pmatrix}\]
\[\begin{pmatrix}1&&&&\\&1&&&\\&&1&&1\\
&&&1&\\&&&&1\end{pmatrix}
\begin{pmatrix}1&&&&\\&1&&&\\&&1&&\\
&&&1&1\\&&&&1\end{pmatrix};
\begin{pmatrix}1&&&&\\&1&&&\\&&1&1&\\
&&&1&1\\&&&&1\end{pmatrix} 
\begin{pmatrix}1&&&&\\&1&1&&\\&&1&&\\
&&&1&1\\&&&&1\end{pmatrix} \]
\[\begin{pmatrix}1&&&&\\&1&&1&\\&&1&&\\
&&&1&1\\&&&&1\end{pmatrix}
\begin{pmatrix}1&1&&&\\&1&&&\\&&1&&\\
&&&1&1\\&&&&1\end{pmatrix}
\begin{pmatrix}1&&1&&\\&1&&&\\&&1&&\\
&&&1&1\\&&&&1\end{pmatrix} 
\begin{pmatrix}1&&&1&\\&1&&&\\&&1&&\\
&&&1&1\\&&&&1\end{pmatrix}\]
\[\begin{pmatrix}1&&&&\\&1&&&1\\&&1&1&\\
&&&1&\\&&&&1\end{pmatrix}
\begin{pmatrix}1&&&&\\&1&1&&\\&&1&1&\\
&&&1&\\&&&&1\end{pmatrix}
\begin{pmatrix}1&1&&&\\&1&&&\\&&1&1&\\
&&&1&\\&&&&1\end{pmatrix}
\begin{pmatrix}1&&1&&\\&1&&&\\&&1&1&\\
&&&1&\\&&&&1\end{pmatrix} \]
\[\begin{pmatrix}1&&&1&\\&1&&&\\&&1&&1\\
&&&1&\\&&&&1\end{pmatrix}
\begin{pmatrix}1&&&&1\\&1&&&\\&&1&1&\\
&&&1&\\&&&&1\end{pmatrix}
\begin{pmatrix}1&&&&\\&1&&1&\\&&1&&1\\
&&&1&\\&&&&1\end{pmatrix} 
\begin{pmatrix}1&1&&&\\&1&&&\\&&1&&1\\
&&&1&\\&&&&1\end{pmatrix}\] 
\[\begin{pmatrix}1&&1&&\\&1&&&\\&&1&&1\\
&&&1&\\&&&&1\end{pmatrix}
\begin{pmatrix}1&&&1&\\&1&1&&\\&&1&&\\
&&&1&\\&&&&1\end{pmatrix} 
\begin{pmatrix}1&&&&1\\&1&1&&\\&&1&&\\
&&&1&\\&&&&1\end{pmatrix} 
\begin{pmatrix}1&&&&\\&1&1&&\\&&1&&1\\
&&&1&\\&&&&1\end{pmatrix}\]
\[\begin{pmatrix}1&1&&&\\&1&1&&\\&&1&&\\
&&&1&\\&&&&1\end{pmatrix}
\begin{pmatrix}1&&1&&\\&1&&1&\\&&1&&\\
&&&1&\\&&&&1\end{pmatrix}
\begin{pmatrix}1&&&1&\\&1&&&1\\&&1&&\\
&&&1&\\&&&&1\end{pmatrix}
\begin{pmatrix}1&1&&&\\&1&&1&\\&&1&&\\
&&&1&\\&&&&1\end{pmatrix}\] 
\[\begin{pmatrix}1&&1&&\\&1&&&1\\&&1&&\\
&&&1&\\&&&&1\end{pmatrix}
\begin{pmatrix}1&1&&&\\&1&&&1\\&&1&&\\
&&&1&\\&&&&1\end{pmatrix}
\begin{pmatrix}1&&&&1\\&1&&1&\\&&1&&\\
&&&1&\\&&&&1\end{pmatrix};
\begin{pmatrix}1&&&&\\&1&1&&\\&&1&1&\\
&&&1&1\\&&&&1\end{pmatrix} \]
\[\begin{pmatrix}1&1&&&\\&1&&&\\&&1&1&\\
&&&1&1\\&&&&1\end{pmatrix}
\begin{pmatrix}1&&1&&\\&1&&&\\&&1&1&\\
&&&1&1\\&&&&1\end{pmatrix} 
\begin{pmatrix}1&&&&\\&1&1&1&\\&&1&&\\
&&&1&1\\&&&&1\end{pmatrix}
\begin{pmatrix}1&&&1&\\&1&1&&\\&&1&&\\
&&&1&1\\&&&&1\end{pmatrix} \]
\[\begin{pmatrix}1&1&&&\\&1&1&&\\&&1&&\\
&&&1&1\\&&&&1\end{pmatrix}
\begin{pmatrix}1&1&&&\\&1&&1&\\&&1&&\\
&&&1&1\\&&&&1\end{pmatrix} 
\begin{pmatrix}1&&1&&\\&1&&1&\\&&1&&\\
&&&1&1\\&&&&1\end{pmatrix}
\begin{pmatrix}1&1&&1&\\&1&&&\\&&1&&\\
&&&1&1\\&&&&1\end{pmatrix} \]
\[\begin{pmatrix}1&&1&1&\\&1&&&\\&&1&&\\
&&&1&1\\&&&&1\end{pmatrix}
\begin{pmatrix}1&1&&&\\&1&&&1\\&&1&1&\\
&&&1&\\&&&&1\end{pmatrix} 
\begin{pmatrix}1&1&&&\\&1&1&&\\&&1&1&\\
&&&1&\\&&&&1\end{pmatrix}
\begin{pmatrix}1&&1&&\\&1&&&1\\&&1&1&\\
&&&1&\\&&&&1\end{pmatrix} \]
\[\begin{pmatrix}1&1&1&&\\&1&&&\\&&1&1&\\
&&&1&\\&&&&1\end{pmatrix}
\begin{pmatrix}1&&&1&\\&1&1&&\\&&1&&1\\
&&&1&\\&&&&1\end{pmatrix}
\begin{pmatrix}1&&&&1\\&1&1&&\\&&1&1&\\
&&&1&\\&&&&1\end{pmatrix}
\begin{pmatrix}1&1&&&\\&1&1&&\\&&1&&1\\
&&&1&\\&&&&1\end{pmatrix} \]
\[\begin{pmatrix}1&&1&&\\&1&&1&\\&&1&&1\\
&&&1&\\&&&&1\end{pmatrix}
\begin{pmatrix}1&1&1&&\\&1&&&\\&&1&&1\\
&&&1&\\&&&&1\end{pmatrix};
\begin{pmatrix}1&1&&&\\&1&1&&\\&&1&1&\\
&&&1&1\\&&&&1\end{pmatrix} 
\begin{pmatrix}1&1&1&&\\&1&&&\\&&1&1&\\
&&&1&1\\&&&&1\end{pmatrix} \]
\[\begin{pmatrix}1&1&&&\\&1&1&1&\\&&1&&\\
&&&1&1\\&&&&1\end{pmatrix}
\begin{pmatrix}1&&&1&\\&1&1&1&\\&&1&&\\
&&&1&1\\&&&&1\end{pmatrix}
\begin{pmatrix}1&1&1&&\\&1&&&1\\&&1&1&\\
&&&1&\\&&&&1\end{pmatrix};
\begin{pmatrix}1&&&&\\&1&&&\\&&1&&\\
&&&1&\\&&&&1\end{pmatrix}.\]
Therefore we have obtained also all unipotent conjugacy classes for $\B_n(k)$ for $n=2,3,4$. In particular, all we have to do is to look for the bottom right $2\times 2$, $3\times 3$ and $4\times 4$ corners of the above $5\times 5$ case.

For $n=2$, representatives are the following: 
$\begin{pmatrix}1&1\\&1\end{pmatrix} ; \begin{pmatrix}1&\\&1\end{pmatrix}$. 

For $n=3$, representatives are the following: 
\[\begin{pmatrix}1&1&\\&1&\\&&1\end{pmatrix}
\begin{pmatrix}1&&1\\&1&\\&&1\end{pmatrix}
\begin{pmatrix}1&&\\&1&1\\&&1\end{pmatrix};
\begin{pmatrix}1&1&\\&1&1\\&&1\end{pmatrix} ;
\begin{pmatrix}1&&\\&1&\\&&1\end{pmatrix}.\]
For $n=4$, representatives are the following:
\[\begin{pmatrix}1&1&&\\&1&&\\&&1&\\&&&1\end{pmatrix}
\begin{pmatrix}1&&1&\\&1&&\\&&1&\\&&&1\end{pmatrix}
\begin{pmatrix}1&&&1\\&1&&\\&&1&\\&&&1\end{pmatrix}
\begin{pmatrix}1&&&\\&1&1&\\&&1&\\&&&1\end{pmatrix}\]
\[\begin{pmatrix}1&&&\\&1&&1\\&&1&\\&&&1\end{pmatrix}
\begin{pmatrix}1&&&\\&1&&\\&&1&1\\&&&1\end{pmatrix};
\begin{pmatrix}1&1&&\\&1&1&\\&&1&\\&&&1\end{pmatrix}
\begin{pmatrix}1&1&&\\&1&&1\\&&1&\\&&&1\end{pmatrix}\]
\[\begin{pmatrix}1&1&&\\&1&&\\&&1&1\\&&&1\end{pmatrix}
\begin{pmatrix}1&&1&\\&1&&1\\&&1&\\&&&1\end{pmatrix}
\begin{pmatrix}1&&1&\\&1&&\\&&1&1\\&&&1\end{pmatrix}
\begin{pmatrix}1&&&\\&1&1&\\&&1&1\\&&&1\end{pmatrix}\]
\[\begin{pmatrix}1&&&1\\&1&1&\\&&1&\\&&&1\end{pmatrix};
\begin{pmatrix}1&1&&\\&1&1&\\&&1&1\\&&&1\end{pmatrix}
\begin{pmatrix}1&1&1&\\&1&&\\&&1&1\\&&&1\end{pmatrix};
\begin{pmatrix}1&&&\\&1&&\\&&1&\\&&&1\end{pmatrix}.\]

\vspace{7mm}
\textbf{Acknowledgement:} The author would like to acknowledge Dr. Anupam Singh and Dr. Rohit Joshi of IISER Pune for many helpful discussions. The author thanks Dr. Pranab Sardar and Dr. Krishnendu Gongopadhyay of IISER Mohali for encouragement.

\end{document}